\newcommand\NN{\mathrm{I\!N}}
\newcommand\RR{\mathrm{I\!R}}
\newcommand\eps{{\varepsilon}}
\newcommand{\thankyou}{Supported in part by Japanese GCOE Program G08: ``Fostering Top Leaders in Mathematics - Broadening the Core and Exploring New Ground".}
\newcommand{\address}{Address: Department of Mathematics, University of North Texas, 1155 Union Circle \#311430, Denton, TX 76203-5017, USA; E-mail: allaart@unt.edu, kiko@unt.edu}
\newtheorem{theorem}{Theorem}[section]
\newtheorem{definition}[theorem]{Definition}
\newtheorem{corollary}[theorem]{Corollary}
\newtheorem{example}[theorem]{Example}
\newtheorem{lemma}[theorem]{Lemma}
\newtheorem{remark}[theorem]{Remark}
\title{The improper infinite derivatives of \\ Takagi's nowhere-differentiable function} 
\author{Pieter C. Allaart and Kiko Kawamura \\
University of North Texas\footnote{\thankyou}\ \,\footnote{\address}}
\date{\today}
\begin{document}
\pagestyle{myheadings}

\maketitle

%\vspace{-0.35in}
\begin{abstract}

Let $T$ be Takagi's continuous but nowhere-differentiable function. 
Using a representation in terms of Rademacher series due to N.~Kono [{\em Acta Math. Hungar.} {\bf 49} (1987)], we give a complete characterization of those points where $T$ has a left-sided, right-sided, or two-sided infinite derivative. This characterization is illustrated by several examples. A consequence of the main result is that the sets of points where $T'(x)=\pm\infty$ have Hausdorff dimension one. As a byproduct of the method of proof, some exact results concerning the modulus of continuity of $T$ are also obtained.

\bigskip
{\it AMS 2000 subject classification}: 26A27 (primary); 26A15 (secondary)

\bigskip
{\it Key words and phrases}: Takagi's function, Nowhere-differentiable function, Improper derivative, Modulus of continuity

\end{abstract}

\section{Introduction}

Takagi's function is one of the simplest examples of a nowhere-differentiable continuous function. It was first discovered in 1903~\cite{Takagi}, and is defined by 
\begin{equation*}
T(x)=\sum_{n=1}^\infty \frac{1}{2^n} \phi^{(n)}(x), \qquad 0\leq x\leq 1,
\label{eq:takagi-function}
\end{equation*}
where $\phi^{(1)}:=\phi$ is the ``tent map" defined by
\begin{equation*}
\phi(x):=\begin{cases}
2x, & \mbox{if $0\leq x\leq 1/2$},\\
2-2x, & \mbox{if $1/2\leq x\leq 1$};
\end{cases}
\end{equation*}
and inductively, $\phi^{(n)}:=\phi\circ\phi^{(n-1)}$ for $n\geq 2$. 
%It is well known that $\phi$ is a typical chaotic dynamical system on $[0,1]$.

%%%%%%%figure1%%%%%%%%
%\begin{figure}
% \begin{center}
%    \epsfig{file=figure1-1.ps,height=1.5in,width=.35\textwidth}
%  \end{center}
%  \caption{The Takagi function}
%    \label{fig:takagi}
%\end{figure}
%%%%%%%%%%%%%%%%%%%%%%

Takagi's function was rediscovered independently by Van der Waerden, Hildebrandt, De Rham and others, and is known alternatively as Van der Waerden's function. Although $T$ does not have a finite derivative anywhere, it is known to have an improper infinite derivative at many points. At which points exactly this is the case appeared to be settled in 1936 by Begle and Ayres \cite{Begle}. Let $O_n$ be the number of zeros, and $I_n=n-O_n$ the number of ones, among the first $n$ binary digits of $x$, and let $D_n=O_n-I_n$. Begle and Ayres claimed that $T'(x)=\infty$ if $D_n\to\infty$, and $T'(x)=-\infty$ if $D_n\to-\infty$. Unfortunately, in their proof they considered only the case $D_n\to\infty$, and only the right-hand derivative, believing the condition for the left-hand derivative to be the same. It is not. In fact, Kruppel \cite{Kruppel}, unaware of Begle and Ayres' paper, recently published a counterexample to their claim, which we explain in Section \ref{sec:example} below.

The main purpose of the present article, then, is to give a complete characterization of those points $x$ at which $T$ has an improper infinite derivative. Guided by Kruppel's counterexample, we replace the condition of Begle and Ayres by a stronger condition, expressed in terms of the binary expansion of $x$. Since the condition we obtain is somewhat intransparent, we illustrate it with several examples. This is done in Section \ref{sec:main-result}. The main result is proved in Section \ref{sec:proof}, using a representation in terms of Rademacher series due to Kono \cite{Kono}. In Section \ref{sec:modulus} we extend, with little extra effort, another recent result of Kruppel \cite{Kruppel} concerning the modulus of continuity of $T$.

\section{Kruppel's counterexample} \label{sec:example}

The following example, which is essentially Example 7.2 of  \cite{Kruppel}, shows that $T'(x)$ may not exist even if $D_n\to\infty$. We present the argument here in a somewhat different (and, we hope, easier to visualize) form. This section may be skipped without loss of continuity. It does, however, lay down the basic idea upon which the proof of necessity in Section \ref{sec:proof} is based.

Let $x=\sum_{n=1}^\infty 2^{-a_n}$, where $a_n=4^n$. For this $x$, we certainly have $D_n\to\infty$. A well-known formula for $T(x)$ at dyadic rational points is
\begin{equation}
T\left(\frac{k}{2^m}\right)=\frac{1}{2^m}\sum_{j=0}^{k-1}(m-2s_j),
\label{eq:Takagi-dyadic}
\end{equation}
where $s_j$ is the number of ones in the binary representation of the integer $j$. (See, e.g., \cite{Kruppel}, p.~44.) For given $m$, let $k$ be the integer such that $k/2^m<x<(k+1)/2^m$. Then
\begin{equation*}
T\left(\frac{k+1}{2^m}\right)-T\left(\frac{k}{2^m}\right)=\frac{1}{2^m}(m-2s_k)=\frac{1}{2^m} D_m,
\end{equation*}
so the secant slopes over the dyadic intervals $[k/2^m,(k+1)/2^m]$ containing $x$ indeed tend to $+\infty$. However, if put $m=a_{n+1}-1$, then $s_k=n$ whereas $s_{k-1}=n+a_{n+1}-a_n-2$ and $s_{k-2}=n+a_{n+1}-a_n-3$. Thus, a simple calculation using \eqref{eq:Takagi-dyadic} yields
\begin{align*}
2^m\left[T\left(\frac{k+1}{2^m}\right)-T\left(\frac{k-2}{2^m}\right)\right]
&=3m-2s_k-2s_{k-1}-2s_{k-2}\\
&=4a_n-a_{n+1}-6n+7\to-\infty,
\end{align*}
as $n\to\infty$. Since the intervals $[(k-2)/2^m,(k+1)/2^m]$ also contain $x$, it follows that $T$ cannot have an infinite derivative at $x$. 

It is easy to imagine how this idea can be extended for sequences $\{a_n\}$ which do not grow quite as fast as $4^n$, by enlarging the intervals even further to the left; that is, we can take the secant slopes over $[(k-j)/2^m,(k+1)/2^m]$ where $j=3,4,\dots$. In fact, we can even let $j$ depend on $m$. It is essentially this realization that lead us to the correct condition for the existence of an improper derivative at a point $x$, as stated in the next section. But, since we wish to consider the left-hand and right-hand derivatives separately, we will use a slightly different approach that does not make use of \eqref{eq:Takagi-dyadic}.

\section{Improper derivatives} \label{sec:main-result}

Define
\begin{align*}
T'_{+}(x)&:=\lim_{h \downarrow 0} \frac{T(x+h)-T(x)}{h}, \\
T'_{-}(x)&:=\lim_{h \uparrow 0} \frac{T(x+h)-T(x)}{h},
\end{align*}
provided each limit exists as an extended real number. It has been pointed out by various authors (e.g. \cite{Begle,Kruppel}) that if $x$ is a dyadic rational (that is, a point of the form $x=k/2^m$), then $T'_+(x)=+\infty$ and $T'_-(x)=-\infty$. We now treat the non-dyadic case.

\begin{theorem} \label{thm:main}
Let $x\in(0,1)$ be non-dyadic, and write
\begin{equation}
x=\sum_{n=1}^\infty 2^{-a_n}, \qquad 1-x=\sum_{n=1}^\infty 2^{-b_n},
\label{eq:expansions}
\end{equation}
where $\{a_n\}$ and $\{b_n\}$ are strictly increasing sequences of positive integers, determined uniquely by $x$. Then:\vspace{-2mm}
\begin{enumerate}[(i)]\setlength{\itemsep}{-1mm}
\item $T'_+(x)=+\infty$ if and only if $a_n-2n\to\infty$.
\item $T'_-(x)=+\infty$ if and only if
\begin{equation}
a_{n+1}-2a_n+2n-\log_2(a_{n+1}-a_n)\to-\infty.
\label{eq:NS-condition+}
\end{equation}
\item $T'_+(x)=-\infty$ if and only if 
\begin{equation}
b_{n+1}-2b_n+2n-\log_2(b_{n+1}-b_n)\to-\infty.
\label{eq:NS-condition-}
\end{equation}
\item $T'_-(x)=-\infty$ if and only if $b_n-2n\to\infty$.
\end{enumerate}
\end{theorem}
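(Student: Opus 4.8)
The plan is to reduce the four statements to two via the symmetry of $T$, to reformulate the hypotheses in terms of the digit-counting process $D_N=O_N-I_N$, and to reduce every secant to a single self-affine increment identity. Since $\phi^{(n)}(1-x)=\phi^{(n)}(x)$ we have $T(1-x)=T(x)$, hence $T'_+(x)=-T'_-(1-x)$; as $1-x=\sum 2^{-b_n}$, parts (iii) and (iv) are exactly parts (ii) and (i) applied to $1-x$, so I will prove only (i) and (ii). I will use two reformulations. First, since the positions $a_1<a_2<\cdots$ of the ones give $I_{a_n}=n$, one has $D_{a_n}=a_n-2n$, and these are the successive local minima of $N\mapsto D_N$; thus the hypothesis of (i) reads simply $D_N\to+\infty$. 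Second, writing $g_n=a_{n+1}-a_n$ for the length of the $n$-th zero-run, the condition in (ii) rearranges to $D_{a_n}-\bigl(g_n-\log_2 g_n\bigr)\to+\infty$. The workhorse is the self-affine identity: with $k=\lfloor 2^Nx\rfloor$ and $t=\{2^Nx\}$,
\begin{equation*}
T\!\left(\tfrac{k+s}{2^N}\right)=T\!\left(\tfrac{k}{2^N}\right)+2^{-N}D_N\,s+2^{-N}T(s),\qquad s\in[0,1],
\end{equation*}
so that, whenever $x$ and $x+h$ share the level-$N$ dyadic interval,
\begin{equation*}
\frac{T(x+h)-T(x)}{h}=D_N+\frac{T(t+2^Nh)-T(t)}{2^Nh}.
\end{equation*}
The slope is thus a drift $D_N$ plus a rescaled Takagi secant at the residual point $t$; I expect Kono's Rademacher representation to be the tool that makes the residual term, and its extremal values, tractable.

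Part (i) should follow cleanly from this identity. For sufficiency, given $D_N\to+\infty$ I would, for each small $h>0$, pick $N=N(h)$ with $u:=2^Nh$ bounded away from $0$ (say $u\in[1/2,1)$); then the residual term is $\ge -\|T\|_\infty/u\ge -4/3$, while $D_N\to\infty$, forcing the slope to $+\infty$ (a minor adjustment covers the case where $x,x+h$ straddle a dyadic point). For necessity, if $D_N\le M$ for infinitely many $N$, I take $h$ carrying $x$ to the right endpoint $(k+1)2^{-N}$ of that interval; then the residual term equals $\bigl(T(1)-T(t)\bigr)/(1-t)=-T(t)/(1-t)\le 0$, so the slope is $\le M$ along a sequence $h\downarrow 0$, and $T'_+(x)\neq+\infty$.

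Part (ii) is where the real work lies. Applying the identity to left increments of $x$ with $N$ chosen inside the $n$-th zero-run (e.g. $N=a_n$), the residual point is $t=\{2^{a_n}x\}\in[2^{-g_n},2^{-g_n+1})$, a point at scale $2^{-g_n}$, and the slope is $D_{a_n}$ plus the steepest descending secant of $T$ from $t$ toward $0$. The crux is to show that this extremal descent has magnitude $g_n-\log_2 g_n+O(1)$: one descends into the run to an optimal depth $i$, where the secant slope behaves like $g_n-i$, and the $\log_2 g_n$ is precisely the correction forced by the logarithmic modulus of continuity of $T$ (the engine developed in Section \ref{sec:modulus}) together with the fact that the residual scale, and hence the admissible depth, is capped by $t$ being non-dyadic. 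The main obstacle is making this two-sided and uniform: for necessity I must exhibit, along a subsequence where $D_{a_n}-(g_n-\log_2 g_n)$ stays bounded, explicit left steps $\delta\downarrow 0$ realizing the descent and keeping the slope bounded; for sufficiency I must bound \emph{every} left secant from below, controlling not only the chosen run but the coupling to the tail $\{2^{a_{n+1}}x\}$, which feeds back through the recursive structure of $T$. I expect Kono's representation to linearize this extremal descent into a maximal partial-sum estimate, from which the sharp constant and the exact threshold $a_{n+1}-2a_n+2n-\log_2(a_{n+1}-a_n)\to-\infty$ drop out.

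Finally, parts (iii) and (iv) require no separate argument: applying (ii) and (i) to $1-x$ and using $T'_+(x)=-T'_-(1-x)$ converts the characterizations in the $a_n$'s into the stated ones in the $b_n$'s, with $+\infty$ and $-\infty$ interchanged. The one piece of external input the whole scheme relies on is the sharp modulus-of-continuity bound for $T$, which supplies the $\log_2$ term and which I am assuming is available from the results of Section \ref{sec:modulus}.
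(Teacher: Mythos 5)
Your skeleton matches the paper's mechanism --- Kono-type decomposition into a drift $D_N$ plus a residual secant, the $\log_2$ correction arising from optimizing a descent depth, and the symmetry reduction --- and your necessity argument for part (i) (right-endpoint secants give slope $\leq D_N - T(t)/(1-t) \leq D_N$) is correct and, if anything, cleaner than the paper's. But the proposal has two genuine gaps. First, in the sufficiency half of part (i): the self-affine identity requires $x$ and $x+h$ to lie in the \emph{same} level-$N$ dyadic interval, i.e.\ $t+2^N h\leq 1$, and choosing $N$ so that $u=2^Nh\in[1/2,1)$ is generically incompatible with this. Whenever the expansions of $x$ and $x+h$ separate at position $k_0<N$ (your ``straddling'' case), the largest admissible level is $k_0$, and then $u=2^{k_0}h\approx 2^{-(p-k_0)}$ (where $2^{-p-1}<h\leq 2^{-p}$) can be exponentially small, so your bound ``residual $\geq -\|T\|_\infty/u$'' degenerates to $-\tfrac{2}{3}2^{p-k_0}$ and is useless. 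This is not a minor adjustment: it is exactly where the whole difficulty of the theorem lives (it is the rogue term $\Sigma_2$ in the paper's decomposition, and the reason Begle--Ayres' claim fails for the left derivative). Repairing it requires two ingredients the paper makes explicit: a sharp modulus-of-continuity bound with an \emph{additive} error (the residual secant over an interval of length $u$ is $\geq -\log_2(1/u)-O(1)$, not merely $\geq -\|T\|_\infty/u$), and the digit cancellation $D_{k_0}=D_p+(p-k_0)-2$, valid because digits $k_0+2,\dots,p$ of $x$ are all ones; only their combination yields the uniform bound slope $\geq D_p+O(1)$, which is precisely the paper's computation via Lemma \ref{lem:key-inequality} and \eqref{eq:middle-sum}.

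Second, part (ii) --- which you yourself call ``where the real work lies'' --- is not proved at all. You correctly rearrange \eqref{eq:NS-condition+} as $D_{a_n}-(g_n-\log_2 g_n)\to\infty$ with $g_n=a_{n+1}-a_n$, and correctly identify the mechanism: the steepest descent available from the residual point $t\approx 2^{-g_n}$ has magnitude $g_n-\log_2 g_n+O(1)$, the logarithm coming from optimizing the depth of descent (this optimization is the paper's Lemma \ref{lem:maximize}, locating the maximum of $(1-2^{-m})(c-m)$). But neither direction is carried out: sufficiency needs a lower bound on \emph{every} left secant, uniform in the straddle depth (the paper gets this from Lemma \ref{lem:key-inequality} combined with Lemma \ref{lem:maximize}), and necessity needs an explicit sequence of increments at the optimal depth inside each run (the paper takes $h=2^{-p}$ with $p$ at distance $m_n$ from the end of the run) together with matching estimates from the opposite side. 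Deferring all of this to ``I expect Kono's representation to linearize this extremal descent into a maximal partial-sum estimate'' leaves out the entire analytic content of the theorem: the two lemmas and the case analysis that occupy Section \ref{sec:proof}. As it stands, the proposal establishes the symmetry reduction and one of the four implications in part (i); the rest is a (correct) heuristic, not a proof.
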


\begin{corollary}
In the notation of Theorem \ref{thm:main}, we have:\vspace{-2mm}
\begin{enumerate}[(i)]\setlength{\itemsep}{-1mm}
\item $T'(x)=+\infty$ if and only if \eqref{eq:NS-condition+} holds;
\item $T'(x)=-\infty$ if and only if \eqref{eq:NS-condition-} holds.
\end{enumerate}
\end{corollary}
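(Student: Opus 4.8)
The plan is to obtain the corollary as an immediate consequence of Theorem~\ref{thm:main}. Since $T'(x)=+\infty$ means exactly that $T$ has a two-sided infinite derivative equal to $+\infty$, it is equivalent to the conjunction ``$T'_+(x)=+\infty$ and $T'_-(x)=+\infty$''; likewise $T'(x)=-\infty$ is equivalent to ``$T'_+(x)=-\infty$ and $T'_-(x)=-\infty$''. Substituting parts (i)--(iv) of the theorem, statement (i) of the corollary becomes the claim that the pair of conditions ``$a_n-2n\to\infty$ and \eqref{eq:NS-condition+}'' is equivalent to \eqref{eq:NS-condition+} by itself, and statement (ii) becomes the analogous claim for the $b_n$ together with \eqref{eq:NS-condition-}. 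One direction is trivial (a conjunction implies each conjunct), so the whole content of the corollary reduces to a single implication and its mirror image, namely that \eqref{eq:NS-condition+} forces $a_n-2n\to\infty$.

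To prove this implication I would pass to the shifted quantities $c_n:=a_n-2n$ and the gaps $d_n:=a_{n+1}-a_n$. Using $c_{n+1}=c_n+d_n-2$, a short computation rewrites the left-hand side of \eqref{eq:NS-condition+} as
\begin{equation*}
a_{n+1}-2a_n+2n-\log_2(a_{n+1}-a_n)=d_n-c_n-\log_2 d_n,
\end{equation*}
so that \eqref{eq:NS-condition+} is precisely the statement $c_n-(d_n-\log_2 d_n)\to+\infty$. Because $\{a_n\}$ is strictly increasing we have $d_n\geq 1$, and since $t>\log_2 t$ for every $t>0$ (equivalently $2^u>u$ for all real $u$), the quantity $d_n-\log_2 d_n$ is positive. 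Hence $c_n\geq c_n-(d_n-\log_2 d_n)\to+\infty$, which is exactly $a_n-2n\to\infty$, as required.

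The mirror implication, that \eqref{eq:NS-condition-} forces $b_n-2n\to\infty$, then follows verbatim with $b_n$ in place of $a_n$, completing part (ii). I do not expect a genuine obstacle in this argument: the only points needing care are the algebraic identity displayed above and the elementary inequality $t>\log_2 t$. Conceptually, the role of the logarithmic correction term is to remain asymptotically negligible against the linear gap $d_n$, so that the divergence to $-\infty$ in \eqref{eq:NS-condition+} still transmits a lower bound that blows up to $c_n$; the one thing to verify carefully is that this correction cannot absorb or cancel the growth, which the positivity of $d_n-\log_2 d_n$ settles at once.
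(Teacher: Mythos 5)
Your proposal is correct and takes essentially the same route as the paper: both reduce the corollary to the single implication that \eqref{eq:NS-condition+} forces $a_n-2n\to\infty$ (the converse conjunct being automatic), and both establish it by rewriting the left-hand side of \eqref{eq:NS-condition+} as $(a_{n+1}-a_n)-\log_2(a_{n+1}-a_n)-(a_n-2n)$ and invoking the positivity of $t-\log_2 t$. Your $c_n,d_n$ notation is just a cosmetic repackaging of the paper's identical computation.
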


\begin{proof}
The condition \eqref{eq:NS-condition+} implies that $a_n-2n\to\infty$. For:
\begin{align*}
a_{n+1}-2a_n+2n-&\log_2(a_{n+1}-a_n)\\
&=a_{n+1}-a_n-\log_2(a_{n+1}-a_n)-(a_n-2n)\\
&\geq -(a_n-2n).
\end{align*}
This gives the first statement; the second follows by symmetry.
\end{proof}

\begin{remark}
{\rm
The condition $a_n-2n\to\infty$ is equivalent to the condition of Begle and Ayres. First, if $D_k\to\infty$, then $a_n-2n=D_{a_n}\to\infty$. Conversely, suppose $a_n-2n\to\infty$. Then for $a_n\leq k<a_{n+1}$, 
\begin{equation*}
D_k=k-2I_k=k-2n\geq a_n-2n\to\infty.
\end{equation*}
Conditions \eqref{eq:NS-condition+} and \eqref{eq:NS-condition-}, on the other hand, may look a bit mysterious. The examples below aim to provide more insight into their meaning. Since the conditions are quite analogous, we focus on \eqref{eq:NS-condition+}.
}
\end{remark}

\begin{example}
{\rm
If the number of consecutive zeros in the binary expansion of $x$ is bounded, say by $M$, then $a_{n+1}-a_n\leq M+1$, and so
\begin{equation*}
a_{n+1}-2a_n+2n-\log_2(a_{n+1}-a_n)\leq M+1-(a_n-2n).
\end{equation*}
Thus, we obtain Kruppel's result \cite[Proposition 5.3]{Kruppel}: if $D_n\to\infty$ and the number of consecutive $0$'s in the binary expansion of $x$ is bounded, then $T'(x)=+\infty$. Similarly, if $D_n\to-\infty$ and the number of consecutive $1$'s is bounded, then $T'(x)=-\infty$.
}
\end{example}

\begin{example}
{\rm
If $\limsup_{n\to\infty}a_{n+1}/a_n>2$, then \eqref{eq:NS-condition+} fails. To see this, write $a_{n+1}=\lambda_n a_n$. Then, whenever $\lambda_n\geq 2+\eps$,
\begin{align*}
a_{n+1}-2a_n+2n-&\log_2(a_{n+1}-a_n)\\
&=(\lambda_n-2)a_n+2n-\log_2((\lambda_n-1)a_n)\\
&\geq(\lambda_n-2)a_n+2n-\log_2((\lambda_n-2)a_n)-(2/\eps)\\
&\geq 2n-(2/\eps)\to\infty,
\end{align*}
where the first inequality follows since, by the mean value theorem,
\begin{equation*}
\log_2(\lambda_n-1)-\log_2(\lambda_n-2)\leq\frac{1}{(\lambda_n-2)\log 2}<2/\eps.
\end{equation*}
Thus, even if the $4$ in Kruppel's counterexample in Section \ref{sec:example} is replaced by a smaller number $\gamma>2$, $T$ will not have an improper derivative at $x$.
}
\end{example}

\begin{example}
{\rm
On the other hand, a sufficient condition for \eqref{eq:NS-condition+} to hold is that, for some $0<\eps\leq 1$,
\begin{equation}
\limsup_{n\to\infty}\frac{a_{n+1}}{a_n}=2-\eps\qquad\mbox{and}\qquad \liminf_{n\to\infty}\frac{a_n}{n}>\frac{2}{\eps}.
\label{eq:sufficient}
\end{equation}
(We leave the easy verification to the reader.) Thus, for instance, \eqref{eq:NS-condition+} holds for $a_n=3n$; for any increasing polynomial of degree $2$ or higher; and for any exponential sequence $a_n=\lfloor\alpha^n\rfloor$ with $1<\alpha<2$. As another example, let $a_n$ be the $n$-th prime number; then $a_n/n\log n\to 1$ by the Prime Number Theorem. Thus, $a_n$ satisfies \eqref{eq:sufficient} with $\eps=1$, and hence it satisfies \eqref{eq:NS-condition+}.
}
\end{example}

If $\limsup_{n\to\infty}a_{n+1}/a_n=2$, then a finer examination of the asymptotics of the sequence $\{a_n\}$ is necessary, as the next example shows.

\begin{example}
{\rm
The sequence $a_n=2^n$ does not satisfy \eqref{eq:NS-condition+}; neither does $a_n=2^n+n$. But $a_n=2^n+(1+\eps)n$ satisfies \eqref{eq:NS-condition+} for any $\eps>0$:
\begin{align*}
a_{n+1}-2a_n+2n-\log_2(a_{n+1}-a_n)
&=(1-\eps)n+1+\eps-\log_2(2^n+1+\eps)\\
%&\leq (1-\eps)n+1+\eps-\log_2(2^n)\\
&\leq -\eps n+1+\eps\to-\infty.
\end{align*}
}
\end{example}

This last example also illustrates that the logarithmic term in \eqref{eq:NS-condition+} can sometimes be of critical importance.

\bigskip
%\begin{example} \label{ex:density}
%{\rm
An important subset of $[0,1]$ is formed by the points $x$ whose binary expansion has a density; that is, points $x=\sum_{k=1}^\infty 2^{-k}\eps_k$ for which the limit
\begin{equation}
d_1(x):=\lim_{n\to\infty}\frac{1}{n}\sum_{k=1}^{n}\eps_k
\label{eq:density1}
\end{equation}
exists. Note that $d_1(x)$ expresses the long-run proportion of $1$'s in the binary expansion of $x$. If it exists, we define
\begin{equation}
d_0(x):=1-d_1(x)
\label{eq:density0}
\end{equation}
to denote the long-run proportion of $0$'s. An immediate consequence of Theorem \ref{thm:main} is that $T(x)$ has an infinite derivative at the majority of points $x$ for which $d_1(x)$ exists.

\begin{corollary} \label{cor:density}
Let $x$ be a non-dyadic point and suppose $d_1(x)$ exists. If either \vspace{-1mm}
\begin{enumerate}[(a)]\setlength{\itemsep}{0mm}
\item $0<d_1(x)<1/2$, or
\item $d_1(x)=0$ and $\limsup_{n\to\infty}a_{n+1}/a_n<2$,
\end{enumerate}
then $T'(x)=+\infty$. Similarly, if either \vspace{-1mm}
\begin{enumerate}[(a)]\setlength{\itemsep}{0mm}
\item $1/2<d_1(x)<1$, or
\item $d_1(x)=1$ and $\limsup_{n\to\infty}b_{n+1}/b_n<2$,
\end{enumerate}
then $T'(x)=-\infty$.
\end{corollary}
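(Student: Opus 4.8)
The plan is to reduce the statement to the characterization recorded in the Corollary to Theorem~\ref{thm:main}, namely that $T'(x)=+\infty$ if and only if \eqref{eq:NS-condition+} holds and $T'(x)=-\infty$ if and only if \eqref{eq:NS-condition-} holds, combined with the sufficient condition \eqref{eq:sufficient} for \eqref{eq:NS-condition+}. The entire content of the proof is then to translate the hypothesis on the digit density $d_1(x)$ into the asymptotic statements about $\{a_n\}$ (respectively $\{b_n\}$) demanded by \eqref{eq:sufficient}.

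First I would establish the elementary bridge between $d_1(x)$ and the position sequence $\{a_n\}$. Since $a_n$ is the location of the $n$-th digit $1$, exactly $n$ of the first $a_n$ binary digits of $x$ equal $1$; thus $\frac1{a_n}\sum_{k=1}^{a_n}\eps_k=n/a_n$. As $\{a_n\}$ is strictly increasing we have $a_n\to\infty$, so $n/a_n$ runs through a subsequence of the sequence in \eqref{eq:density1} and therefore $n/a_n\to d_1(x)$. Hence $a_n/n\to 1/d_1(x)$ when $d_1(x)>0$, while $a_n/n\to\infty$ when $d_1(x)=0$. A further consequence, valid whenever $0<d_1(x)<1$ so that $a_n/n$ has a finite positive limit, is that
\[
\frac{a_{n+1}}{a_n}=\frac{a_{n+1}/(n+1)}{a_n/n}\cdot\frac{n+1}{n}\longrightarrow 1 .
\]

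With this in hand I would check the hypotheses of \eqref{eq:sufficient} case by case. In case (a), $0<d_1(x)<1/2$ gives $\limsup_n a_{n+1}/a_n=1$ and $\liminf_n a_n/n=1/d_1(x)>2$; taking $\eps=1$ makes $2-\eps=1$ and $2/\eps=2$, so \eqref{eq:sufficient} applies and \eqref{eq:NS-condition+} holds. In case (b), $d_1(x)=0$ gives $\liminf_n a_n/n=\infty$, and writing $L:=\limsup_n a_{n+1}/a_n<2$ (note $L\geq 1$ because $a_{n+1}>a_n$) and $\eps:=2-L\in(0,1]$, we again satisfy \eqref{eq:sufficient}, since $2-\eps=L$ and $2/\eps<\infty=\liminf_n a_n/n$. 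In both cases the Corollary to Theorem~\ref{thm:main} yields $T'(x)=+\infty$.

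The three statements about $T'(x)=-\infty$ follow by the same argument applied to $\{b_n\}$. Here I would use that the binary digits of $1-x$ are the complements of those of $x$, so that the density of $1$'s in $1-x$ equals $d_0(x)=1-d_1(x)$ as in \eqref{eq:density0}, and the same counting gives $n/b_n\to d_0(x)$. Thus $1/2<d_1(x)<1$ corresponds to $0<d_0(x)<1/2$, and $d_1(x)=1$ corresponds to $d_0(x)=0$; repeating the computation above with $b_n$ in place of $a_n$ establishes \eqref{eq:NS-condition-}, whence $T'(x)=-\infty$. The main point requiring care—and the reason part (b) carries the extra hypothesis $\limsup a_{n+1}/a_n<2$—is that the limit $a_{n+1}/a_n\to 1$ is available only when $a_n/n$ converges to a finite value, i.e.\ when $d_1(x)>0$; for $d_1(x)=0$ the ratios need not converge (as Kruppel's example $a_n=4^n$ shows, where \eqref{eq:NS-condition+} fails), so their boundedness away from $2$ must be imposed. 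The same strictness forces the exclusion of the borderline density $d_1(x)=1/2$, at which $\liminf_n a_n/n=2$ just fails the strict inequality $\liminf_n a_n/n>2/\eps$ required by \eqref{eq:sufficient}.
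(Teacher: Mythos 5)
Your proposal is correct and follows essentially the same route as the paper: both reduce the claim to the sufficient condition \eqref{eq:sufficient} via the observations that $n/a_n\to d_1(x)$, $n/b_n\to d_0(x)$, and that $a_{n+1}/a_n\to 1$ whenever $0<d_1(x)<1$, then invoke the equivalence $T'(x)=\pm\infty\iff$ \eqref{eq:NS-condition+}/\eqref{eq:NS-condition-}. You merely spell out the counting argument and the case-by-case choice of $\eps$ that the paper leaves implicit.
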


\begin{proof}
By the definition \eqref{eq:density1}, $n/a_n\to d_1(x)$ and $n/b_n\to d_1(1-x)=d_0(x)$. In particular, if $0<d_1(x)<1$, it follows that $a_{n+1}/a_n\to 1$ and $b_{n+1}/b_n\to 1$. Thus, under the conditions of the corollary, \eqref{eq:sufficient} (or its analog for the sequence $\{b_n\}$) is satisfied.
\end{proof}

Corollary \ref{cor:density} has a remarkable consequence for the Hausdorff dimension of the set of points where $T'(x)=\pm\infty$. (See \cite{Falconer} for the definition and basic properties of Hausdorff dimension.)

\begin{corollary}
Let $S_\infty=\{x\in[0,1]:T'(x)=\infty\}$, and $S_{-\infty}=\{x\in[0,1]:T'(x)=-\infty\}$. Then
\begin{equation*}
\dim_H S_\infty=\dim_H S_{-\infty}=1,
\end{equation*}
where $\dim_H$ denotes the Hausdorff dimension.
\end{corollary}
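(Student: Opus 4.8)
The plan is to produce, inside each of $S_\infty$ and $S_{-\infty}$, subsets whose Hausdorff dimension can be made arbitrarily close to $1$; since both sets are contained in $[0,1]$ and hence have dimension at most $1$, this forces their dimension to equal $1$. The engine for placing such subsets inside $S_\infty$ and $S_{-\infty}$ is Corollary~\ref{cor:density}, which converts a statement about the \emph{density} of binary digits into the existence of an improper derivative.

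Concretely, for $0<p<1/2$ I would set
\[
A_p:=\{x\in(0,1): d_1(x)=p\}.
\]
Every $x\in A_p$ is non-dyadic, since a dyadic rational has binary digit density $0$ or $1$, and every such $x$ satisfies $0<d_1(x)<1/2$; hence part (a) of Corollary~\ref{cor:density} gives $T'(x)=+\infty$, i.e. $A_p\subseteq S_\infty$. It therefore suffices to compute $\dim_H A_p$.

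Here I would invoke the classical Besicovitch--Eggleston theorem on digit-frequency sets (see, e.g., \cite{Falconer}): the set of $x\in[0,1]$ whose binary expansion contains the digit $1$ with asymptotic frequency $p$ has Hausdorff dimension equal to the binary entropy
\[
H(p):=-p\log_2 p-(1-p)\log_2(1-p).
\]
Thus $\dim_H A_p=H(p)$, and since $A_p\subseteq S_\infty$ we obtain $\dim_H S_\infty\geq H(p)$ for every $p\in(0,1/2)$. Letting $p\uparrow 1/2$ and using that $H$ is continuous with $H(1/2)=1$ yields $\dim_H S_\infty\geq 1$, whence $\dim_H S_\infty=1$. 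The argument for $S_{-\infty}$ is identical, instead taking $p\in(1/2,1)$ and applying the second half of Corollary~\ref{cor:density}(a), which gives $T'(x)=-\infty$ on each such $A_p$.

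The steps are routine once the digit-frequency dimension result is in hand, so the only point requiring care---and the one I expect to be the main (minor) obstacle---is the clean matching of hypotheses: one must stay strictly inside the open interval $(0,1/2)$ (respectively $(1/2,1)$) so as to use the unconditional part (a) of Corollary~\ref{cor:density} and avoid the boundary cases (b), which carry the additional $\limsup$ restriction. The continuity of $H$ at $1/2$ then delivers the full dimension in the limit without ever having to reach the endpoints, where the entropy would itself drop to $0$.
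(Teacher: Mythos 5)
Your proof is correct and follows essentially the same route as the paper: both use Corollary \ref{cor:density}(a) to place the digit-frequency sets $\{x : d_1(x)=p\}$, $0<p<1/2$, inside $S_\infty$, invoke the Besicovitch--Eggleston entropy formula from \cite{Falconer} for their Hausdorff dimension, and let $p\uparrow 1/2$ (the paper phrases this as a supremum over $\alpha\in(0,1/2)$). Your explicit remarks that such points are automatically non-dyadic and that one must stay strictly inside $(0,1/2)$ are sound and match what the paper leaves implicit.
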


\begin{proof}
By Corollary \ref{cor:density}, $S_\infty$ contains the sets
%\begin{equation}
$F(\alpha):=\{x\in[0,1]:d_1(x)=\alpha\}$, for $0<\alpha<1/2$.
%\end{equation}
It is well known that
\begin{equation*}
\dim_H F(\alpha)=\frac{-\alpha\log(\alpha)-(1-\alpha)\log(1-\alpha)}{\log2}
\end{equation*}
(see \cite[Proposition 10.1]{Falconer}). Thus,
\begin{equation*}
\dim_H S_\infty \geq \dim_H \bigcup_{0<\alpha < 1/2}F(\alpha)
=\sup_{0<\alpha < 1/2} \dim_H F(\alpha) = 1.
\end{equation*}
The dimension of $S_{-\infty}$ follows in the same way.
\end{proof}

Corollary \ref{cor:density} left out the (binary) {\em normal numbers}; that is, those numbers $x$ for which $d_1(x)=1/2$. These numbers form a set of Lebesgue measure one by Borel's theorem. However, the law of the iterated logarithm implies that for almost all of those, $\limsup_{n\to\infty}D_n=+\infty$ and $\liminf_{n\to\infty}D_n=-\infty$. Hence, at almost all normal numbers, $T$ does not even have a one-sided infinite derivative. Less extremely, for any {\em rational} normal number $x$ (such as $x=1/3$), $D_n$ oscillates between finite bounds so that $T'(x)$ does not exist. 

Nonetheless, many normal numbers satisfy $T'(x)=\pm\infty$. For instance, if $a_n=2n+\lfloor \sqrt{n}\rfloor$ or $a_n=2n+\lfloor\log n\rfloor$, it is easy to see that \eqref{eq:NS-condition+} is satisfied. On the other hand, surprisingly perhaps, there exist normal numbers for which $T_+'(x)=+\infty$, but $T_-'(x)$ fails to exist. Here we construct one such example.

\begin{example}
{\rm
Let $a_1=3$, and for $n\geq 1$, define $a_{n+1}$ recursively as follows. If $a_n\leq 2n+\lfloor\sqrt{n}\rfloor$, put $a_{n+1}=2n+3\lfloor\sqrt{n}\rfloor$; otherwise, put $a_{n+1}=a_n+1$. Since $a_n$ always increases by at least $1$ and $2n+\lfloor\sqrt{n}\rfloor$ increases by at most $3$ at each step, it is clear that for every $n$, $a_n\geq 2n+\lfloor\sqrt{n}\rfloor-1$. Hence $a_n-2n\to\infty$. Furthermore, $a_n\leq 2n+3\lfloor\sqrt{n}\rfloor$ for each $n$, and so $a_n/n\to 2$. Finally, it is easy to check that $a_n\leq 2n+\lfloor\sqrt{n}\rfloor$ for infinitely many $n$. Thus, infinitely often,
\begin{align*}
a_{n+1}-2a_n+2n-&\log_2(a_{n+1}-a_n)\\
&\geq 2n+3\lfloor\sqrt{n}\rfloor-2(2n+\lfloor\sqrt{n}\rfloor)+2n-\log_2(2\lfloor\sqrt{n}\rfloor+1)\\
&\geq\lfloor\sqrt{n}\rfloor-\frac12\log n-2\to\infty.
\end{align*}
}
\end{example}

\section{Proof of the main theorem} \label{sec:proof}

To prove Theorem \ref{thm:main} we will use an approach by N. Kono \cite{Kono}. Let $x$ and $h$ be real numbers such that $0\leq x<x+h<1$, and write
\begin{equation*}
x=\sum_{k=1}^\infty 2^{-k}\eps_k, \qquad x+h=\sum_{k=1}^\infty 2^{-k}\eps_k',
\end{equation*}
where $\eps_k,\eps_k'\in\{0,1\}$. When $x$ is dyadic rational, there are two binary expansions, but we choose the one which is eventually all zeros. 

For $h>0$, let $p:=p(h) \in \NN$ such that $2^{-p-1}<h\leq 2^{-p}$ and let
\begin{equation*}
k_0:=\max\{k: \eps_1=\eps_1',\dots,\eps_k=\eps_k'\}
\end{equation*}
(or $k_0=0$ if $\eps_1\neq \eps_1'$). Clearly $0\leq k_0\leq p$, and we have the implications
\begin{align}
k_0<p \quad&\Rightarrow\quad \eps_{k_0+1}=0 \mbox{ and } \eps_{k_0+1}'=1, \label{eq:kono-fact1}\\
k_0+2\leq p \quad&\Rightarrow\quad \eps_k'=0 \mbox{ and } \eps_k=1 \quad \mbox{for} \quad k_0+2\leq k\leq p. \label{eq:kono-fact2}
\end{align}

Observe that by the assumption for the expression of $x$, $k_0 \to \infty$ as $h\downarrow 0$. Let $X_n(x):=1-2\eps_n(x)=(-1)^{\eps_n(x)}$ denote the $n$-th Rademacher function. For $h>0$, the following representation is a special case of Lemma 3 in \cite{Kono}:
\begin{equation*}
T(x+h)-T(x)=\Sigma_1+\Sigma_2+\Sigma_3,
\end{equation*}
where
\begin{gather*}
\Sigma_1=h\sum_{n=1}^{k_0}X_n(x)=hD_{k_0},\\
\Sigma_2=\left[\sum_{k=p+1}^\infty 2^{-k}(1-\eps_k-\eps_k')\right]\sum_{n=k_0+1}^p X_n(x), \\
\Sigma_3=\frac12\sum_{n=p+1}^{\infty}\sum_{k=n+1}^{\infty}\left[X_n(x)X_k(x)-X_n(x+h)X_k(x+h)\right]2^{-k}.
\end{gather*}
Since $\Sigma_3=O(h)$, it plays no role in determining whether $T'_+(x)=\pm \infty$. In fact, for many points $x$ the behavior of the difference quotient is controlled by $\Sigma_1$ alone, but in some cases, $\Sigma_2$ may be of the same order of magnitude but with the opposite sign. The key to the proof of Theorem \ref{thm:main}, then, is a careful analysis of this `rogue' term, especially the factor $\sum_{k=p+1}^\infty 2^{-k}(1-\eps_{k}-\eps_k')$. Note that the other factor can be written more simply: if $k_0<p$, then
\begin{equation}
\sum_{n=k_0+1}^p X_n(x)=-(p-k_0-2),
\label{eq:middle-sum}
\end{equation}
in view of \eqref{eq:kono-fact1} and \eqref{eq:kono-fact2}.

\begin{lemma} \label{lem:key-inequality}
Assume $k_0<p$. Then
\begin{equation*}
\sum_{k=p+1}^\infty 2^{-k}(1-\eps_{k}-\eps_k')\leq h.
\end{equation*}
Moreover, if $m\geq 0$ such that $\eps_{p+m+1}=0$, then
\begin{equation*}
\sum_{k=p+1}^\infty 2^{-k}(1-\eps_{k}-\eps_k')\geq -h(1-2^{-m}).
\end{equation*}
\end{lemma}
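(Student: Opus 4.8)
The plan is to reduce both inequalities to a single clean identity relating $h$ to the tails of the two binary expansions. Write $u:=\sum_{k=p+1}^\infty 2^{-k}\eps_k$ and $v:=\sum_{k=p+1}^\infty 2^{-k}\eps_k'$ for the tail contributions beyond position $p$. Since $\sum_{k=p+1}^\infty 2^{-k}=2^{-p}$, the quantity to be estimated is simply
\[
S:=\sum_{k=p+1}^\infty 2^{-k}(1-\eps_k-\eps_k')=2^{-p}-u-v.
\]

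Next I would compute $h=(x+h)-x=\sum_{k=1}^\infty 2^{-k}(\eps_k'-\eps_k)$ by splitting the sum at $k_0$ and $p$. The digits with $k\le k_0$ cancel; by \eqref{eq:kono-fact1} the digit $k_0+1$ contributes $+2^{-(k_0+1)}$; by \eqref{eq:kono-fact2} the block $k_0+2\le k\le p$ contributes $-\sum_{k=k_0+2}^p 2^{-k}=-(2^{-(k_0+1)}-2^{-p})$; and the remaining tail contributes $v-u$. These telescope to the identity
\[
h=2^{-p}+v-u,
\]
valid also in the boundary case $k_0+1=p$, where the middle block is empty. Comparing the two displays gives $S=h-2v$, which is the engine for both bounds.

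The first inequality is then immediate: since $v\ge 0$, we have $S=h-2v\le h$. For the second, rearranging $S\ge -h(1-2^{-m})$ and using $h-v=2^{-p}-u$ shows it is equivalent to $2^{-p}-u\ge h\,2^{-(m+1)}$. Here the hypothesis enters: because $\eps_{p+m+1}=0$, the nonnegative sum $2^{-p}-u=\sum_{k=p+1}^\infty 2^{-k}(1-\eps_k)$ retains the full term $2^{-(p+m+1)}$, so $2^{-p}-u\ge 2^{-(p+m+1)}$; and since $h\le 2^{-p}$ by the choice of $p$, we have $h\,2^{-(m+1)}\le 2^{-(p+m+1)}$. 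The two estimates combine to finish the proof.

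The only step requiring care---and the one I would treat as the main obstacle---is the identity $h=2^{-p}+v-u$, since this is where the assumption $k_0<p$ and the digit facts \eqref{eq:kono-fact1}--\eqref{eq:kono-fact2} are actually used; the geometric summation over the middle block $k_0+2\le k\le p$ must be handled correctly, including the degenerate case when that block is empty. Once this identity is established everything else is elementary, as the two inequalities become one-line consequences of $S=h-2v$ together with $v\ge 0$ and the single-term lower bound on $2^{-p}-u$.
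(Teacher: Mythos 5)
Your proposal is correct and takes essentially the same approach as the paper: your identity $h=2^{-p}+v-u$ is exactly the paper's key equation \eqref{eq:addition} (which the paper justifies by a carry argument in the binary addition rather than by your explicit telescoping over the digit blocks), and both inequalities then follow from the same elementary facts, namely $v\geq 0$, the single-term lower bound on $2^{-p}-u$ coming from $\eps_{p+m+1}=0$, and $h\leq 2^{-p}$. The algebraic rearrangements differ only cosmetically, so there is nothing to change.
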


\begin{proof}
If $h<2^{-p}$, then a ``$1$" is carried from position $p+1$ to position $p$ in the addition of $x$ and $h$ because of the assumption that $k_0<p$. If $h=2^{-p}$, then $\eps_k=\eps_k'$ for all $k>p$. In both cases, we have
\begin{equation}
\sum_{k=p+1}^\infty 2^{-k}\eps_k+h=2^{-p}+\sum_{k=p+1}^\infty 2^{-k}\eps_k',
\label{eq:addition}
\end{equation}
and so
\begin{equation*}
h-\sum_{k=p+1}^\infty 2^{-k}(1-\eps_{k}-\eps_k')
=2^{-p}+\sum_{k=p+1}^\infty 2^{-k}(2\eps_k'-1)\geq 0.
\end{equation*}
For the second statement, observe that $h(1-2^{-m})\geq h-2^{-m-p}$ since $h\leq 2^{-p}$. Thus, using \eqref{eq:addition} we obtain
\begin{align*}
h(1-2^{-m})+\sum_{k=p+1}^\infty &2^{-k}(1-\eps_{k}-\eps_k')\\
&\geq 2^{-p}+\sum_{k=p+1}^\infty 2^{-k}(1-2\eps_k)-2^{-m-p}\\
&\geq 2^{-p}+\sum_{k=p+1}^\infty 2^{-k}(-1)+2\cdot 2^{-(p+m+1)}-2^{-m-p}=0,
\end{align*}
where the last inequality follows since $\eps_{p+m+1}=0$.
\end{proof}

\begin{lemma} \label{lem:maximize}
Let $c\geq 1$, and define the function $f:\{0,1,2,\dots\}\to\RR$ by
\begin{equation*}
f(m)=(1-2^{-m})(c-m).
\end{equation*}
Let $m^*$ be the largest integer $m$ where $f(m)$ attains its maximum. 
Then
\begin{equation*}
\log_2 c-2<m^*\leq \log_2 c+1.
\end{equation*}
\end{lemma}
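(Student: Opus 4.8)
The plan is to reduce everything to the sign of the forward difference $f(m+1)-f(m)$, which records where $f$ increases and decreases. A direct computation (write $a=2^{-m}$, expand both products, and collect terms) gives
\[
f(m+1)-f(m)=2^{-m-1}(c-m+1)-1,
\]
so that $f(m+1)-f(m)\ge 0$ precisely when $2^{m+1}\le c-m+1$. The quantity $c-m+1-2^{m+1}$ is strictly decreasing in $m$, so this inequality holds for all $m$ up to some largest integer $M$ and fails for every $m>M$. At $m=0$ it reads $2\le c+1$, which holds because $c\ge 1$, so $M\ge 0$. Consequently $f$ is non-decreasing on $\{0,1,\dots,M+1\}$ and strictly decreasing thereafter; hence $f$ is unimodal and its largest maximizer is exactly $m^{*}=M+1$. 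This identification of $m^{*}$ is the pivot on which both bounds turn.

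For the upper bound I would use the defining property of $M$, namely $2^{M+1}\le c-M+1$, i.e. $2^{m^{*}}\le c-m^{*}+2$. Since $m^{*}=M+1\ge 1$, the right-hand side is at most $c+1\le 2c$, so $2^{m^{*}}\le 2c$ and therefore $m^{*}\le\log_2 c+1$.

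For the lower bound I would argue by contradiction. Because $M+1$ is the first index at which the difference inequality fails, $2^{M+2}>c-M$, that is $2^{m^{*}+1}>c-m^{*}+1$. Suppose now that $m^{*}\le\log_2 c-2$, so that $2^{m^{*}+1}\le c/2$. Combining these, $c/2\ge 2^{m^{*}+1}>c-m^{*}+1$, which forces $m^{*}>c/2+1>c/2$. On the other hand, the elementary inequality $\log_2 c<c/2+2$, valid for all $c\ge 1$ (it follows from $2^{c/2}\ge 1+\tfrac{c}{2}\ln 2$, a case of $e^{x}\ge 1+x$), gives $\log_2 c-2<c/2$. Hence $m^{*}\le\log_2 c-2<c/2<m^{*}$, a contradiction, and so $m^{*}>\log_2 c-2$.

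The routine parts are the algebraic identity for the forward difference and the monotonicity of $c-m+1-2^{m+1}$. The only genuinely delicate point is the lower bound: the estimate is not tight (the true maximizer sits near $\log_2 c$), and the naive route—substituting the upper bound $m^{*}\le\log_2 c+1$ directly into $2^{m^{*}+1}>c-m^{*}+1$—would require $c/2\ge\log_2 c$, which just fails near $c=3$. The contradiction argument sidesteps this by invoking the much looser comparison $\log_2 c<c/2+2$, where there is ample slack, and this is the step I would be most careful to get right.
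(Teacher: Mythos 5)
Your proof is correct and takes essentially the same route as the paper: both compute the forward difference $f(m+1)-f(m)=2^{-(m+1)}(c+1-m)-1$, deduce unimodality, and work from the two inequalities $2^{m^*}+m^*-1\le c+1$ and $2^{m^*+1}+m^*>c+1$ at the largest maximizer. The only difference is in the last step of the lower bound, where the paper argues directly via $2^{m^*+2}>2^{m^*+1}+m^*>c$ (using $2^{m^*+1}>m^*$) while you reach the same conclusion by contradiction using $\log_2 c<c/2+2$; both finishes are valid.
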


\begin{proof}
An easy calculation gives
\begin{equation*}
f(m+1)-f(m)=2^{-(m+1)}(c+1-m)-1,
\end{equation*}
so $f(m+1)\geq f(m)$ if and only if $2^{m+1}+m\leq c+1$. Thus,
\begin{gather}
2^{m^*}+m^*-1\leq c+1, \label{eq:lower-estimate}\\
2^{m^*+1}+m^*>c+1. \label{eq:upper-estimate}
\end{gather}
From \eqref{eq:lower-estimate} we obtain
%\begin{equation}
$m^*\leq\log_2 c+1$.
%\label{eq:m-upper-estimate}
%\end{equation}
On the other hand, \eqref{eq:upper-estimate} yields
\begin{equation*}
2^{m^*+2}>2^{m^*+1}+m^*>c,
\end{equation*}
so that $m^*>\log_2 c-2$.
\end{proof}

\begin{proof}[Proof of Theorem \ref{thm:main}]
Since $T(1-x)=T(x)$, it is enough to prove parts (i) and (iii). Statements (ii) and (iv) follow from these by replacing $x$ with $1-x$.

We first prove part (i). Assume $D_n\to\infty$, and let $h>0$. Suppose first that $k_0\leq p-2$. 
Then $p-k_0-2\geq 0$, so it follows from \eqref{eq:middle-sum} and Lemma \ref{lem:key-inequality} that
\begin{equation*}
\Sigma_2\geq -h(p-k_0-2).
\end{equation*}
And, since $O_{k_0}=O_p-1$,
\begin{equation*}
\Sigma_1=h(O_{k_0}-I_{k_0})=h(2O_{k_0}-k_0)=h(2O_p-k_0-2),
\end{equation*}
so that
\begin{align*}
T(x+h)-T(x)&=\Sigma_1+\Sigma_2+\Sigma_3\\
&\geq h(2O_p-k_0-2)-h(p-k_0-2)+O(h)\\
&=h(2O_p-p)+O(h)=hD_p+O(h).
\end{align*}
If, on the other hand, $k_0\geq p-1$, then $\Sigma_2=O(h)$, and $O_{k_0}\geq O_p-1$. Thus,
\begin{equation*}
T(x+h)-T(x)=\Sigma_1+O(h)\geq h(D_p-2)+O(h).
\end{equation*}
In both cases,
\begin{equation*}
\frac{T(x+h)-T(x)}{h}\geq D_p+O(1),
\end{equation*}
and hence, $T_+'(x)=+\infty$.

Conversely, suppose $\liminf_{n\to\infty}D_n<\infty$. Choose a sequence $\{n_k\}$ increasing to $+\infty$ so that $\lim_{k\to\infty}D_{n_k}<\infty$, and let $p_k:=\min\{n\geq n_k:\eps_n=0\}$. %Then $D_{p_k}\leq D_{n_k}+1$. 
For $h=2^{-p_k}$ and $p=p_k$, we have $k_0=p-1$ and so $\Sigma_2=O(h)$. Hence $T(x+h)-T(x)=\Sigma_1+O(h)=hD_{k_0}+O(h)$. Since $D_{k_0}=D_{p_k}-1\leq D_{n_k}$, it follows that
\begin{equation*}
\liminf_{h\downarrow 0}\frac{T(x+h)-T(x)}{h}\leq\lim_{k\to\infty}D_{n_k}+O(1)<\infty.
\end{equation*}

Next, we prove statement (iii). Suppose first that \eqref{eq:NS-condition-} holds. Let $h>0$, $2^{-p-1}<h\leq 2^{-p}$, and let $n$ be the integer such that $b_n\leq p<b_{n+1}$. Put $m=b_{n+1}-p-1$; then $p+m+1=b_{n+1}$ and so $\eps_{p+m+1}=0$, since the $b_n$'s indicate the locations of the zeros in the binary expansion of $x$. Now it follows from \eqref{eq:NS-condition-} that $b_n-2n\to\infty$, or equivalently, $D_k\to-\infty$. If $p-k_0<2$, then $\Sigma_1=hD_{k_0}$ and $\Sigma_2=O(h)$, and so
\begin{equation*}
\frac{T(x+h)-T(x)}{h}=D_{k_0}+O(1)\to-\infty.
\end{equation*}
Assume then that $p-k_0\geq 2$. By \eqref{eq:kono-fact1} and \eqref{eq:kono-fact2}, $k_0=b_n-1$, and since $O_{b_n}=n$, we have
\begin{equation*}
\Sigma_1=h(2O_{k_0}-k_0)\leq h(2O_{b_n}-k_0)=h(2n-k_0).
\end{equation*}
As for $\Sigma_2$, Lemma \ref{lem:key-inequality} gives
\begin{equation*}
\Sigma_2\leq h(1-2^{-m})(p-k_0-2)\leq h(1-2^{-m})(p-k_0).
\end{equation*}
Hence,
\begin{align}
\begin{split}
\frac{T(x+h)-T(x)}{h}&\leq 2n-k_0+(1-2^{-m})(p-k_0)+O(1)\\
%&=h\big(2n-(b_n-1)+(1-2^{-m})(b_{n+1}-1-m-(b_n-1)-2)\big)+O(h)\\
&=2n-b_n+(1-2^{-m})(b_{n+1}-b_n-m)+O(1).
\end{split}
\label{eq:high-estimate}
\end{align}
For given $n$, let $m_n$ be the largest value of $m$ which maximizes the function
\begin{equation*}
f_n(m)=(1-2^{-m})(b_{n+1}-b_n-m).
\end{equation*}
By Lemma \ref{lem:maximize} we have, for any $m$,
\begin{align*}
2n-b_n+f_n(m)&\leq b_{n+1}-2b_n+2n-m_n\\
&\leq b_{n+1}-2b_n+2n-\log_2(b_{n+1}-b_n)+2.
\end{align*}
This, in combination with \eqref{eq:NS-condition-},\eqref{eq:high-estimate}, and the already established result for the case $p-k_0<2$, yields $T_+'(x)=-\infty$.

For the converse, assume that \eqref{eq:NS-condition-} fails.
Suppose first that $D_n\to-\infty$, or equivalently, $b_n-2n\to\infty$.  
Replacing the sequence $\{b_n\}$ with a suitable subsequence if necessary, we may assume there exists $M\in\RR$ such that
\begin{equation}
b_{n+1}-2b_n+2n-\log_2(b_{n+1}-b_n)>M \qquad\mbox{for all $n$}.
\label{eq:bounded-below}
\end{equation}
%\begin{equation*}
%\limsup_{n\to\infty}\big(b_{n+1}-2b_n+2n-\log_2(b_{n+1}-b_n)\big)>-\infty.
%\end{equation*}
Fix $n\in\NN$ temporarily, let $m=m_n$, and let $h=2^{-p}$, where $p=b_{n+1}-m$. By Lemma \ref{lem:maximize},
\begin{align*}
b_{n+1}-m-b_n&\geq b_{n+1}-b_n-\log_2(b_{n+1}-b_n)-1\\
%&=b_{n+1}-2b_n+2n-\log_2(b_{n+1}-b_n)+(b_n-2n)-1\\
&>M+(b_n-2n)-1\to\infty.
\end{align*}
Thus, for all sufficiently large $n$, $b_n<p<b_{n+1}$. Therefore $k_0=b_n-1$, and
\begin{equation*}
\Sigma_1=h(2O_{k_0}-k_0)=h(2n-b_n-1).
\end{equation*}
Furthermore, $p-k_0-2=b_{n+1}-b_n-m-1\geq 0$ for $n$ large enough, and
\begin{align*}
\sum_{k=p+1}^\infty 2^{-k}(1-\eps_k-\eps_k')
&\leq -\sum_{k=p+1}^{b_{n+1}-1}2^{-k}+\sum_{k=b_{n+1}}^\infty 2^{-k}\\
%&=-2^{-p}\sum_{l=1}^{b_{n+1}-p-1}2^{-l}+2^{-(b_{n+1}-1}\\
&=-2^{-p}\left(1-2^{-(b_{n+1}-p-2)}\right)=-h\left(1-2^{-(m-2)}\right),
\end{align*}
where the inequality follows since $\eps_k=\eps_k'=1$ for $k=p+1,\dots,b_{n+1}-1$. Hence
\begin{equation*}
\Sigma_2\geq h\left(1-2^{-(m-2)}\right)(b_{n+1}-b_n-m-1).
\end{equation*}
Putting these results together, we obtain
\begin{align*}
\frac{T(x+h)-T(x)}{h}&\geq 2n-b_n-1+\left(1-2^{-(m-2)}\right)(b_{n+1}-b_n-m-1)+O(1)\\
&\geq b_{n+1}-2b_n+2n-m-2^{-(m-2)}(b_{n+1}-b_n)+O(1).
\end{align*}
By Lemma \ref{lem:maximize}, $m\leq\log_2(b_{n+1}-b_n)+1$, and the term $2^{-(m-2)}(b_{n+1}-b_n)$ is bounded. Thus,
%\begin{equation}
%2^{-(m-2)}(b_{n+1}-b_n)<2^{-(m-2)}(2^{m+1}+m-1)\leq 2^4,
%\end{equation}
%so the last term in the above development is bounded. Thus, using Lemma \ref{lem:maximize}, we obtain
\begin{equation}
\frac{T(x+h)-T(x)}{h}\geq b_{n+1}-2b_n+2n-\log_2(b_{n+1}-b_n)+O(1),
\label{eq:DQ-lower-bound}
\end{equation}
which is bounded below, by \eqref{eq:bounded-below}.

If $\limsup_{n\to\infty}D_n>-\infty$, then we can choose a sequence $\{n_k\}$ increasing to $+\infty$ so that $\lim_{k\to\infty}D_{n_k}>-\infty$. Let $p_k:=\max\{n\leq n_k:\eps_n=0\}$; then $D_{p_k}\geq D_{n_k}$. 
For $h=2^{-p_k}$ and $p=p_k$, we have $k_0=p-1$ and so $\Sigma_2=O(h)$. Hence $T(x+h)-T(x)=hD_{k_0}+O(h)$. Since $D_{k_0}=D_{p_k}-1\geq D_{n_k}-1$, it follows that
\begin{equation*}
\limsup_{h\downarrow 0}\frac{T(x+h)-T(x)}{h}\geq\lim_{k\to\infty}D_{n_k}+O(1)>-\infty,
\end{equation*}
completing the proof.
\end{proof}

\section{The modulus of continuity} \label{sec:modulus}

In this final section we present some exact results concerning the modulus of continuity of $T$. Let $d_1(x)$ and $d_0(x)$ denote the densities of $1$ and $0$ in the binary expansion of $x$, respectively, provided they exist (see \eqref{eq:density1} and \eqref{eq:density0}). Kruppel \cite{Kruppel} recently proved that
%\begin{enumerate}[(1)]
if $x$ is dyadic, then 
\begin{equation}
\lim_{h \to 0} \frac{T(x+h)-T(x)}{|h| \log_2(1/|h|)}=1,
\label{eq:dyadic-case}
\end{equation}
while if $x$ is non-dyadic but rational, then
\begin{equation}
\lim_{h \to 0} \frac{T(x+h)-T(x)}{h \log_2(1/|h|)}= 1-\frac{2(\eps_{k+1}+\eps_{k+2}+\cdots +\eps_{k+m})}{m},
\label{eq:modulus-limit}
\end{equation}
where $\eps_{k+1} \eps_{k+2} \cdots \eps_{k+m}$ is a repeating part in the binary expansion of $x$. Observe that for an $x$ of the latter type, $d_1(x)=(\eps_{k+1}+\eps_{k+2}+\cdots +\eps_{k+m})/m$, so the right hand side of \eqref{eq:modulus-limit} can be written as $d_0(x)-d_1(x)$.
%\end{enumerate}

Here we will give a simpler proof of \eqref{eq:dyadic-case}, and generalize \eqref{eq:modulus-limit} to arbitrary real numbers. More precisely, we give a complete characterization of the set of points $x$ for which the limit in \eqref{eq:modulus-limit} exists, and show that if it does, it must equal $d_0(x)-d_1(x)$. 

\begin{definition}
A point $x\in[0,1]$ is {\em density-regular} if $d_1(x)$ exists and one of the following holds:\vspace{-1mm}
\begin{enumerate}[(a)]\setlength{\itemsep}{-1mm}
\item $0<d_1(x)<1$; or
\item $d_1(x)=0$ and $a_{n+1}/a_n\to 1$; or
\item $d_1(x)=1$ and $b_{n+1}/b_n\to 1$.
\end{enumerate}
Here, $\{a_n\}$ and $\{b_n\}$ are the sequences determined by \eqref{eq:expansions}.
\end{definition}

\begin{lemma} \label{lem:right-limit}
Let $x\in[0,1]$ and suppose $d_1(x)$ exists. 

(i) If $d_1(x)<1$, then
\begin{equation}
\lim_{h \downarrow 0} \frac{T(x+h)-T(x)}{h\log_2(1/|h|)}=d_{0}(x)-d_{1}(x).
\label{eq:exact-modulus}
\end{equation} 

(ii) Suppose $d_1(x)=1$. Then
\begin{equation*}
\lim_{h \downarrow 0} \frac{T(x+h)-T(x)}{h\log_2(1/|h|)} \quad\mbox{exists}
\end{equation*} 
if and only if $b_{n+1}/b_n\to 1$, in which case the limit is equal to $-1$.
\end{lemma}

\begin{proof}
Assume throughout that $d_1(x)$ exists. If $d_1(x)<1$, then $b_{n+1}/b_n\to 1$ holds automatically (see the proof of Corollary \ref{cor:density}). Thus, we can prove the two statements by a single argument.

Suppose first that $b_{n+1}/b_n\to 1$. Then $k_0/p\to 1$ as $h\downarrow 0$. We can write 
%\begin{equation*}
$\Sigma_1=hD_{k_0}=h(O_{k_0}-I_{k_0})$.
%\end{equation*}
Since $p \leq \log_2(1/|h|)<p+1$ and 
\begin{equation*}
\lim_{k_0 \to \infty}\frac{O_{k_0}-I_{k_0}}{k_0}=d_0(x)-d_1(x), 
\end{equation*}
it follows that 
\begin{equation*}
\lim_{h \downarrow 0} \frac{\Sigma_1}{h\log_2(1/|h|)}=d_{0}(x)-d_{1}(x).
\end{equation*} 

Next, by Lemma \ref{lem:key-inequality} and \eqref{eq:middle-sum}, we have
%\begin{equation}
$|\Sigma_2|\leq h(p-k_0)$,
%\end{equation}
and hence,
\begin{equation*}
\frac{|\Sigma_2|}{h\log_2(1/|h|)}\leq \frac{p-k_0}{p}\to 0.
\end{equation*} 
Finally, since $\Sigma_3=O(h)$, \eqref{eq:exact-modulus} follows. 

Conversely, suppose $d_1(x)=1$ and $b_{n+1}/b_n$ does not tend to $1$; in other words, $\limsup_{n\to\infty}b_n/b_{n+1}<1$. On the one hand, we can choose an increasing index sequence $\{p_n\}$ such that $\eps_{p_n}=0$ for each $n$; such a sequence exists even if $x$ is dyadic, in view of our convention of choosing the representation ending in all zeros for such points. Put $h_n:=2^{-p_n}$. Then $k_0=p_n-1$, so $\Sigma_2=O(h_n)$ and
\begin{equation*}
\lim_{n\to\infty}\frac{T(x+h_n)-T(x)}{h_n\log_2(1/|h_n|)}=\lim_{n\to\infty}\frac{\Sigma_1}{h_n\log_2(1/|h_n|)}=-1,
\end{equation*}
as above. On the other hand, we can let $h$ approach $0$ along a sequence $\{h_n\}$ just as in the last part of the proof of Theorem \ref{thm:main}. Since $p=b_{n+1}-m_n\sim b_{n+1}$, dividing both sides by $\log_2(1/|h|)$ in \eqref{eq:DQ-lower-bound} gives
\begin{equation*}
\liminf_{n\to\infty}\frac{T(x+h_n)-T(x)}{h_n\log_2(1/|h_n|)}\geq \liminf_{n\to\infty}\frac{b_{n+1}-2b_n}{b_{n+1}}>-1,
\end{equation*}
since the remaining terms in \eqref{eq:DQ-lower-bound} are of smaller order than $b_{n+1}$ in view of $n/b_n\to 0$. Thus, the limit in \eqref{eq:exact-modulus} does not exist.
\end{proof}

\begin{corollary}[Kruppel \cite{Kruppel}, Proposition 3.2] \label{cor:Kruppel}
If $x$ is dyadic, then \eqref{eq:dyadic-case} holds.
\end{corollary}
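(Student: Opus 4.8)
The plan is to deduce Corollary \ref{cor:Kruppel} directly from Lemma \ref{lem:right-limit}, establishing the two one-sided limits separately and then combining them. Since $x$ is dyadic, I would first adopt the convention of Section \ref{sec:proof} and represent $x$ by the binary expansion that is eventually all zeros. Then the proportion of ones in the expansion vanishes, so $d_1(x)=0$ and $d_0(x)=1$. In particular $d_1(x)<1$, which is precisely the hypothesis of Lemma \ref{lem:right-limit}(i).

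The right-hand limit is then immediate. For $h>0$ we have $|h|=h$, so Lemma \ref{lem:right-limit}(i) gives
\begin{equation*}
\lim_{h\downarrow 0}\frac{T(x+h)-T(x)}{|h|\log_2(1/|h|)}=d_0(x)-d_1(x)=1.
\end{equation*}
For the left-hand limit I would exploit the symmetry $T(1-x)=T(x)$. Set $y:=1-x$, which is again dyadic, and for $h<0$ put $g:=|h|=-h>0$. Since $x+h=x-g$ and $1-(x-g)=y+g$, one computes
\begin{equation*}
T(x+h)-T(x)=T(y+g)-T(y),
\end{equation*}
while the denominator $|h|\log_2(1/|h|)=g\log_2(1/g)$ is left unchanged. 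Thus the left-hand difference quotient at $x$ coincides with the right-hand difference quotient at $y$. Applying the previous step to $y$ (which also satisfies $d_1(y)=0$ under the all-zeros convention, hence $d_1(y)<1$) yields
\begin{equation*}
\lim_{h\uparrow 0}\frac{T(x+h)-T(x)}{|h|\log_2(1/|h|)}=\lim_{g\downarrow 0}\frac{T(y+g)-T(y)}{g\log_2(1/g)}=1.
\end{equation*}
Combining the two one-sided limits gives the two-sided limit \eqref{eq:dyadic-case}, which is the assertion of the corollary.

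As for difficulties, the argument is essentially bookkeeping and carries no substantial analytic content beyond what is already packaged in Lemma \ref{lem:right-limit}. The only points requiring care are, first, fixing a single binary representation so that both $x$ and $1-x$ have density $d_1=0$ (this guarantees $d_1<1$, so that part (i) of the lemma applies rather than the more delicate part (ii)); and second, correctly tracking the sign change $h\mapsto -h$ together with the reflection $x\mapsto 1-x$, so that the left-hand limit is genuinely reduced to a right-hand limit having the \emph{same} denominator. Once these are handled, the conclusion is immediate.
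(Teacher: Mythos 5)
Your proposal is correct and follows essentially the same route as the paper: both arguments note that $d_1(x)=d_1(1-x)=0$ for dyadic $x$, apply Lemma \ref{lem:right-limit}(i) to $x$ for the right-hand limit, and reduce the left-hand limit to a right-hand limit at $1-x$ via the symmetry $T(1-x)=T(x)$. Your write-up merely makes the change of variables $g=-h$ and the bookkeeping of the denominator more explicit than the paper does.
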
 
 
\begin{proof}
If $x$ is dyadic, then $d_1(x)=d_1(1-x)=0$. Thus, the statement follows by applying Lemma \ref{lem:right-limit} first to $x$ and then to $1-x$, since for $h<0$,
\begin{equation*}
\frac{T(x+h)-T(x)}{|h|\log_2(1/|h|)}
%&=\frac{T(1-x-h)-T(1-x)}{|h|\log_2(1/|h|)} \\
=\frac{T(1-x+|h|)-T(1-x)}{|h|\log_2(1/|h|)},
\end{equation*}
by the symmetry of $T$.
\end{proof} 

For non-dyadic $x$, we obtain the following result. Before stating it we observe that, if $\lim_{n\to\infty}n/b_n=d$, then $d_0(x)$ exists and is equal to $d$. (This is straightforward to verify.)
%For if $b_n\leq k<b_{n+1}$, then
%\begin{equation*}
%\frac{n}{n+1}\frac{n+1}{b_{n+1}}\leq\frac{|\{i\leq k:\eps_i=0\}|}{k}\leq \frac{n}{b_n}.
%\end{equation*}

\begin{theorem} \label{thm:kiko}
Let $x$ be non-dyadic, and define $a_n$ and $b_n$ as in \eqref{eq:expansions}. The limit
\begin{equation*}
\lim_{h \to 0} \frac{T(x+h)-T(x)}{h \log_2(1/|h|)}
%\label{eq:two-sided-limit}
\end{equation*}
exists if and only if $x$ is density-regular, in which case the limit is equal to $d_0(x)-d_1(x)$.
\end{theorem}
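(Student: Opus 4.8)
The plan is to reduce everything to the one-sided statement of Lemma~\ref{lem:right-limit} via the symmetry $T(y)=T(1-y)$, and then to handle the two directions of the equivalence separately. For a point $y$ at which it exists, write $R(y):=\lim_{h\downarrow 0}\frac{T(y+h)-T(y)}{h\log_2(1/|h|)}$. The first step is to record the reflection identity: for $h<0$, setting $|h|=-h$ and using $T(y)=T(1-y)$ gives
\begin{equation*}
\frac{T(x+h)-T(x)}{h\log_2(1/|h|)}=-\,\frac{T((1-x)+|h|)-T(1-x)}{|h|\log_2(1/|h|)},
\end{equation*}
so the left-hand limit at $x$ exists if and only if $R(1-x)$ exists, in which case it equals $-R(1-x)$. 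Hence the two-sided limit exists iff $R(x)$ and $R(1-x)$ both exist and $R(x)=-R(1-x)$. I will also record the elementary facts that, for non-dyadic $x$, the digits of $1-x$ are $1-\eps_k$, so that $d_1(1-x)=d_0(x)$, the $a$-sequence of $1-x$ is $\{b_n\}$, the $b$-sequence of $1-x$ is $\{a_n\}$, and $D_n(1-x)=-D_n(x)$.

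For the \emph{if} direction I assume $x$ density-regular and split into the three defining cases. In case (a), $0<d_1(x)<1$, both $d_1(x)<1$ and $d_1(1-x)=d_0(x)<1$, so Lemma~\ref{lem:right-limit}(i) applies to both $x$ and $1-x$, giving $R(x)=d_0(x)-d_1(x)$ and $R(1-x)=d_1(x)-d_0(x)=-R(x)$. In case (b), $d_1(x)=0$ and $a_{n+1}/a_n\to 1$, Lemma~\ref{lem:right-limit}(i) gives $R(x)=1$, while for $1-x$ we have $d_1(1-x)=1$ and the $b$-sequence of $1-x$ is $\{a_n\}$, so the ratio condition of Lemma~\ref{lem:right-limit}(ii) is exactly $a_{n+1}/a_n\to 1$; hence $R(1-x)=-1=-R(x)$. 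Case (c) is symmetric. In each case both one-sided limits exist, are negatives of one another, and the common value is $d_0(x)-d_1(x)$, as claimed.

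The \emph{only if} direction is the substantive part, and the main obstacle is showing that mere existence of the two-sided limit forces $d_1(x)$ to exist (density-regularity presupposes this). Suppose the limit exists and equals $\rho$; then $R(x)=\rho$ and, by the reflection identity, $R(1-x)=-\rho$. The key idea is that one can recover $D_n/n$ along \emph{all} $n$ by probing the two right-hand limits at their respective zero positions. Taking $h=2^{-b_j}$ (so $\eps_{b_j}=0$, whence $k_0=p-1$ and $\Sigma_2=\Sigma_3=O(h)$) forces $D_{b_j-1}/b_j\to\rho$; applying the same reasoning to $1-x$ with $h=2^{-a_j}$ (a zero of $1-x$) and using $D_n(1-x)=-D_n(x)$ forces $D_{a_j-1}/a_j\to\rho$ as well. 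Since every positive integer is either some $a_j$ or some $b_j$, these two subsequential statements combine to give $D_{n-1}/n\to\rho$ for all $n$, hence $D_n/n\to\rho$ and $d_1(x)=(1-\rho)/2$ exists.

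Once $d_1(x)$ is known to exist, I finish by identifying which defining condition holds. If $0<d_1(x)<1$ we are in case (a). If $d_1(x)=0$, then $R(1-x)$ exists with $d_1(1-x)=1$, so Lemma~\ref{lem:right-limit}(ii) applied to $1-x$ forces its $b$-sequence $\{a_n\}$ to satisfy $a_{n+1}/a_n\to 1$, which is case (b); symmetrically $d_1(x)=1$ yields case (c) via $b_{n+1}/b_n\to 1$. In every case $x$ is density-regular, and the value of the limit was already identified as $d_0(x)-d_1(x)$ in the \emph{if} direction, completing the proof. The delicate points to watch are the bookkeeping of the $a$/$b$-sequences and densities under $x\mapsto 1-x$, and verifying that the error terms $\Sigma_2,\Sigma_3$ are genuinely negligible against $\log_2(1/|h|)$ at the chosen test values $h=2^{-b_j}$ and $h=2^{-a_j}$.
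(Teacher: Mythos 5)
Your proposal is correct and follows essentially the same route as the paper: reduce everything to Lemma~\ref{lem:right-limit} via the symmetry $T(y)=T(1-y)$, and in the only-if direction probe the difference quotient at values $h=2^{-p}$ with $\eps_p=0$, where the Kono decomposition collapses to $\Sigma_1+O(h)$. The only minor difference is in the non-existence case: the paper shows the right-hand limit alone already fails when $d_1(x)$ does not exist (the quotient at $h=2^{-b_n}$ equals $2n/b_n-1+o(1)$ and $n/b_n$ cannot converge), whereas you probe both one-sided limits at the zeros of $x$ and of $1-x$ and recover $d_1(x)$ from the partition of $\NN$ into $\{a_j\}$ and $\{b_j\}$ --- both arguments are valid.
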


\begin{proof}
If $d_1(x)$ exists, the result follows by applying Lemma \ref{lem:right-limit} first to $x$ and then to $1-x$, since $d_1(x)=1-d_1(1-x)$.

Suppose $d_1(x)$ does not exist. For $n\in\NN$, let $p=b_n$ and $h=2^{-p}$; then $k_0=p-1$, so $\Sigma_2=O(h)$ and $\Sigma_1=h D_{k_0}=h(D_p-1)=h(2n-b_n-1)$. Thus,
\begin{equation*}
\frac{T(x+h)-T(x)}{h \log_2(1/|h|)}=\frac{2n-b_n-1}{b_n}+o(1)=2\frac{n}{b_n}-1+o(1),
\end{equation*}
which does not have a limit as $n\to\infty$.
\end{proof}

%\section*{Acknowledgment}

\footnotesize

\end{document}